\author{Sergei Konyagin}
\address{Steklov Institute of Mathematics \\ 8 Gubkin Street, Moscow, 119991, Russia}
\email{konyagin@mi-ras.ru}
\author{Paul Pollack}
\address{Department of Mathematics \\ University of Georgia \\ Athens, GA 30602, USA}
\email{pollack@uga.edu}
\renewcommand\subset\subseteq
\renewcommand{\pod}[1]{\allowbreak\mathchoice
  {\if@display \mkern 18mu\else \mkern 8mu\fi (#1)}
  {\if@display \mkern 18mu\else \mkern 8mu\fi (#1)}
  {\mkern4mu(#1)}
  {\mkern4mu(#1)}
}
\DeclareMathAlphabet{\curly}{U}{rsfs}{m}{n}
\newcommand\Z{\mathbb{Z}}
\newcommand\Q{\mathbb{Q}}
\newtheorem{thm}{Theorem}[section]
\newtheorem{prop}[thm]{Proposition}
\newtheorem{lem}[thm]{Lemma}
\newtheorem{hyp}[thm]{Hypothesis}
\theoremstyle{remark}
\newtheorem{rmk}[thm]{Remark}
\newcommand\ord{\mathrm{ord}}
\newcommand{\genlegendre}[4]{%
  \genfrac{(}{)}{}{#1}{#3}{#4}%
  \if\relax\detokenize{#2}\relax\else_{\!#2}\fi
}
\newcommand{\leg}[3][]{\genlegendre{}{#1}{#2}{#3}}
\title{A problem in comparative order theory}
\keywords{multiplicative order, order-dominant pair, Schinzel--W\'ojcik problem, support problem}
\subjclass{Primary 11A07; Secondary 11A15, 11N36}
\begin{document}
\begin{abstract} Write $\ord_p(\cdot)$ for the multiplicative order in $\mathbb{F}_p^{\times}$. Recently, Matthew Just and the second author investigated the problem of classifying pairs $\alpha, \beta \in \mathbb{Q}^{\times}\setminus\{\pm 1\}$ for which $\ord_p(\alpha) > \ord_p(\beta)$ holds for infinitely many primes $p$. They called such pairs \textsf{order-dominant}.
We describe an easily-checkable sufficient condition for $\alpha,\beta$ to be order-dominant. Via the large sieve, we show that almost all integer pairs $\alpha,\beta$ satisfy our condition, with a power savings on the size of the exceptional set.
\end{abstract}

\maketitle

\section{Introduction}
For each rational prime $p$, we write $v_p(\cdot)$ for the $p$-adic valuation. If $\epsilon\in \Q_p$ with $v_p(\epsilon)=0$, we let $\ord_p(\epsilon)$ denote the multiplicative order of $\epsilon$ in the residue ring $\Z_p/p\Z_p \cong \Z/p\Z = \mathbb{F}_p$.
Now fix $\alpha, \beta \in \mathbb{Q}^{\times}\setminus\{\pm 1\}$. For all but finitely many primes $p$, both $\alpha$ and $\beta$ are units in $\Z_p$, and so it is sensible to ask how $\ord_p(\alpha)$ and $\ord_p(\beta)$ compare, as $p$ varies within the set of primes.

From results of Corrales-Rodriga\~{n}ez and Schoof \cite{CS97}, if $\ord_p(\alpha)=\ord_p(\beta)$ for all but finitely many $p$, then $\alpha=\beta^{\pm 1}$.
(Schinzel \cite{schinzel60} had earlier obtained results in the same direction as \cite{CS97} but less general.)
In \cite{SW92}, Schinzel and W\'{o}jcik show that whenever $\alpha, \beta \in\Q^{\times}\setminus \{\pm 1\}$, there are infinitely many primes $p$ with $v_p(\alpha)=v_p(\beta)=0$ and $\ord_p(\alpha)=\ord_p(\beta)$. (Earlier, special cases of this result had been worked out --- but not published --- by J.S. Wilson, J.G. Thompson, and J.W.S. Cassels.) Inspired by these works, Just and the second author \cite{JP21} recently investigated the following related question: For which pairs $\alpha,\beta$ are there infinitely many $p$ with $v_p(\alpha)=v_p(\beta)=0$ and $\ord_p(\alpha) > \ord_p(\beta)$? They termed such pairs \textsf{order-dominant}.

Under GRH, one can show that $\alpha,\beta$ is order-dominant as long as $\alpha$ is not a power of $\beta$. (When $\alpha$ and $\beta$ are multiplicatively independent, a property stronger than order-dominance is proved, subject to GRH, by Järviniemi; see \cite[Theorem 1.4]{j20}. The remaining cases are easier and do not need GRH.) In \cite{JP21}, Just and Pollack unconditionally establish order-dominance for several families of integer pairs $A,B$. A typical result of \cite{JP21} is as follows: If $A, B$ are odd positive integers, and the Jacobi symbol $\leg{-B(1-B)}{A}=-1$  or $\leg{1-B}{A}=-1$, then $A,B$ is order dominant. Key to the proof of this and many of the other theorems of \cite{JP21} is the following observation (implicit in work) of Banaszak \cite{banaszak98}.

Let $\alpha,\beta \in \Q^{\times}\setminus\{\pm 1\}$, and let $p$ be an odd prime. Suppose that (a) $\alpha,\beta$ are units in $\Z_p$, (b) $v_p( \alpha^{2k}-\beta) \ge 1$ for some integer $k$, and (c) the Legendre symbol $\leg{\alpha}{p}=-1$. Condition (b) implies that $\beta$ is a square mod $p$ and that we have a containment of cyclic subgroups, $\langle \beta\bmod{p}\rangle \subset \langle \alpha\bmod{p}\rangle$. Since $\beta$ is a square mod $p$ while $\alpha$ is not (by (c)), $\alpha\bmod{p}\notin \langle \beta\bmod{p}\rangle$. Thus, $\langle \beta\bmod{p}\rangle \subsetneq \langle \alpha\bmod{p}\rangle$, and so $\ord_p(\beta)< \ord_p(\alpha)$. (In fact, we get slightly more from this argument: $\ord_{p}{(\alpha)}/\ord_p{(\beta)}$ is a positive even integer.) Thus, if infinitely many primes $p$ satisfy (a)--(c), then $\alpha,\beta$ is an order-dominant pair.

Let us call a pair $\alpha,\beta$ satisfying (a)--(c) for infinitely many $p$ a \textsf{special pair}. Our main result is an easily-checkable criterion for $\alpha,\beta$ to be special.
\begin{thm}\label{thm:extension} Let $\alpha, \beta \in \Q^{\times}\setminus \{\pm 1\}$. Suppose there is an odd prime $q$ for which
\begin{enumerate}
    \item[(i)] $v_{q}(\alpha)=v_{q}(\beta)=0$,
    \item[(ii)] the Legendre symbol $\leg{\alpha}{q}=-1$,
    \item[(iii)] there is an integer $k$ with $v_{q}(\alpha^{2k}-\beta) \ge v_{q}(\alpha^{q-1}-1)$.
\end{enumerate}
Then $\alpha,\beta$ is a special pair.
\end{thm}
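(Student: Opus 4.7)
Let $e := v_q(\alpha^{q-1}-1) \ge 1$ and let $k$ denote an integer as in hypothesis (iii), so $f := v_q(\alpha^{2k}-\beta) \ge e$. If $\alpha^{2k}=\beta$ in $\Q^\times$, then conditions (a), (b) of Banaszak's setup hold at every prime $p$ coprime to the height of $\alpha$, and since (ii) forces $\alpha$ not to be a rational square, Chebotarev applied to $\Q(\sqrt{\alpha^*})/\Q$ (with $\alpha^*$ the signed squarefree part of $\alpha$) produces infinitely many $p$ with $\legendre{\alpha}{p}=-1$, giving (c). So assume henceforth that $\alpha^{2k}\ne\beta$.

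The first substantive move is a Hensel-type lift: I claim that for every $E \ge e$ there exists an integer $k_E$ (with $k_e=k$) satisfying $v_q(\alpha^{2k_E}-\beta)\ge E$, proved by induction on $E$. The inductive step uses the $q$-adic logarithm, which for odd $q$ identifies $(1+q\Z_q,\cdot)$ with $(q\Z_q,+)$. Since $\log\alpha^{q-1}$ has $q$-adic valuation exactly $e$, and (ii) forces $\ord_q(\alpha)$ to be even so that $\ord_q(\alpha)/2\in\Z$, adjusting $k_E$ by a suitable integer multiple of $\ord_q(\alpha)/2$ cancels the leading $q$-adic term of $\log(\alpha^{2k_E}/\beta)$ and raises the valuation by one.

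With these $k_E$ in hand, the nonzero rationals $N_E := \alpha^{2k_E}-\beta$ are pairwise distinct, since $\alpha\in\Q^\times\setminus\{\pm 1\}$ has infinite multiplicative order. By the $S$-unit equation theorem of Evertse-Schlickewei-Schmidt applied to $\alpha^{2k_E}-N_E=\beta$, the $N_E$ cannot be supported on any fixed finite prime set (else infinitely many $S$-unit solutions to a fixed equation would exist, a contradiction). Hence infinitely many primes $p$ divide some $N_E$, and each such $p$ (outside a finite bad set) satisfies (a) and, via $\alpha^{2k_E}\equiv\beta\pmod{p}$, also (b).

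The main obstacle is then extracting an infinite subset of these primes satisfying (c). My plan is to exploit further freedom in the Hensel lift: at each inductive stage, $k_E$ may be shifted by an arbitrary multiple of $q^{E-e}\ord_q(\alpha)/2$, and by the Chinese Remainder theorem one may additionally prescribe its residue modulo any integer coprime to $q\ord_q(\alpha)$. By choosing these congruences suitably one can arrange the Kronecker symbol $\legendre{\alpha^*}{N_E}=-1$, a condition depending only on $N_E \bmod 4|\alpha^*|$; each such $N_E$ must then have an odd number of prime factors $p$ with $\legendre{\alpha}{p}=-1$, hence at least one. Arguing rigorously that \emph{infinitely many distinct} such primes appear as $E$ varies---rather than the same small set recurring---requires a refinement of the $S$-unit argument restricted to an arithmetic progression, coupled with additional congruence control on the $k_E$ separating out new primes at each stage; this coupling between the $q$-adic Hensel construction and global congruence conditions is the delicate heart of the argument.
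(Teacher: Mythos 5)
Your proposal does not close, and the gap you flag at the end is not a technicality but the entire content of the theorem. Two specific problems. First, the step ``by choosing these congruences suitably one can arrange $\leg{\alpha^*}{N_E}=-1$'' is not available: the value of $\leg{d}{N_E}$ (with $d$ the squarefree part of $\alpha$) is essentially \emph{forced}, independent of $E$. Indeed, for an odd prime $p\mid d$ with $v_p(\alpha)>0$ one has $N_E=\alpha^{2k_E}-\beta\equiv -\beta\pmod{p}$ no matter how $k_E$ is chosen, and modulo $8$ one has $\alpha^{2k_E}\equiv 1$ when $v_2(\alpha)=0$; by reciprocity these fixed residues (together with the sign and the primes where $v_p(\alpha)\ne 0$) determine $\leg{d}{N_E}$. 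You cannot steer this symbol to $-1$ by congruence conditions on $k_E$. Second, even if you could, a single fixed prime $p_0$ with $\leg{d}{p_0}=-1$ dividing every $N_E$ to an odd multiplicity would satisfy your conclusion for every $E$ while producing no new primes; the $S$-unit theorem guarantees infinitely many primes dividing the $N_E$, but gives no control on their Legendre symbols, so it cannot bridge this. (It is also far heavier machinery than needed: the actual proof uses nothing beyond lifting the exponent, quadratic reciprocity, and elementary $p$-adic bookkeeping.)

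The paper's resolution is to argue by contradiction and to work with a \emph{ratio} rather than a single value. Assume the set $\mathcal{P}$ of primes satisfying (a)--(c) is finite (it contains $q$). One constructs exponents $K<K'$ so that $v_q(\alpha^{2K'}-\beta)=v_q(\alpha^{2K}-\beta)+1$, while for every other prime $p\in\mathcal{P}$, and for $p=2$, the valuations of $\alpha^{2K'}-\beta$ and $\alpha^{2K}-\beta$ \emph{agree}; this is achieved by your Hensel-type lift (Lemma \ref{lem:surjective}) together with building $\phi(p^{f_p+1})$ for each suspected $p$ into the factor $N$ dividing $K'-K$, so that $\alpha^{2(K'-K)}\equiv 1\pmod{p^{f_p+1}}$ and the ultrametric inequality freezes $v_p$. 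Then $\eta=\frac{\alpha^{2K'}-\beta}{\alpha^{2K}-\beta}$ (after clearing the denominators coming from $v_p(\alpha)<0$) equals $q$ times a product of primes $p$ with $\leg{\alpha}{p}=+1$, whence $\leg{d}{\eta}=\leg{d}{q}=-1$; but $\eta\equiv 1$ modulo every odd prime dividing $d$ and modulo $8$, so Lemma \ref{lem:QR} gives $\leg{d}{\eta}=+1$, a contradiction. This ratio trick is exactly what replaces your unavailable step of prescribing $\leg{d}{N_E}$, and the exponent engineering is what rules out ``the same small set recurring.'' Your proposal identifies the right ingredients (the $q$-adic lift, reciprocity, parity of multiplicities) but is missing the mechanism that makes them interact, so as written it is not a proof.
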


The authors of \cite{JP21} report that they were unable to establish the order-dominance of the pair $17, 2$. That $17,2$ is order-dominant (indeed, special) is an immediate consequence of Theorem \ref{thm:extension}: Take $q=7$ and $k=1$.

Probably $\alpha, \beta$ is special as long as $\alpha$ is not a rational square and $\Q(\sqrt{\alpha})\ne \Q(\sqrt{\beta})$. These conditions are easily seen to be necessary. A simple computer program implementing the criterion of Theorem \ref{thm:extension} verifies that these conditions are also sufficient for all integer pairs $\alpha,\beta$ with $1 < |\alpha|, |\beta| \le 1000$.  We do not know how to prove they are sufficient in general. 
Nevertheless, using the large sieve we can show that being ``special'' is actually rather ordinary: Almost all integer pairs are special.

\begin{thm}\label{thm:almostall} For all large positive integers $N$, the number of non-special pairs of integers $\alpha,\beta$ with $1<|\alpha|,|\beta| \le N$ is $O(N^{3/2}\log{N})$.
\end{thm}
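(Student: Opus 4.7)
The plan is to apply a second-moment (``Tur\'an sieve'') argument to count witnesses for the hypotheses of Theorem~\ref{thm:extension}. Set $Q := \lfloor N^{1/2}\rfloor$ and, for each pair $(\alpha,\beta)$ with $1<|\alpha|,|\beta|\le N$, define
\[
Z(\alpha,\beta) := \#\{q \text{ odd prime}, Q/2<q\le Q : q\nmid\alpha\beta,\ \alpha \text{ is a primitive root}\bmod q,\ \beta \text{ is a nonzero square}\bmod q,\ \alpha^{q-1}\not\equiv 1\pmod{q^2}\}.
\]
If $Z(\alpha,\beta)\ge 1$ then the pair is special by Theorem~\ref{thm:extension}: at any contributing $q$, condition (ii) holds because a primitive root of an odd prime is a non-residue; for (iii), choose $k$ with $\alpha^{2k}\equiv\beta\pmod q$ (possible since $\beta$ lies in the cyclic group generated by $\alpha\bmod q$) and note $v_q(\alpha^{2k}-\beta)\ge 1 = v_q(\alpha^{q-1}-1)$. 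Hence every non-special pair satisfies $Z=0$, and it suffices to bound $\#\{(\alpha,\beta):Z(\alpha,\beta)=0\}$.

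For the first moment, Hensel's lemma shows that among the $q$ lifts to $\mathbb{Z}/q^2\mathbb{Z}$ of each primitive root $\bmod q$ exactly one is Wieferich, so the $\alpha$-conditions pick out $(q-1)\phi(q-1)$ residues mod $q^2$, while the $\beta$-condition picks out $(q-1)/2$ residues mod $q$. Multiplying the corresponding counts in $[-N,N]$, summing over $q\in(Q/2,Q]$, and invoking Stephens' unconditional average $\sum_{q\le X}\phi(q-1)/(q-1)\sim A\pi(X)$ (with $A$ Artin's constant) yields
\[
E[Z] := \frac{1}{(2N)^2}\sum_{\alpha,\beta}Z(\alpha,\beta)\ \asymp\ \frac{\sqrt N}{\log N}.
\]

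The second moment is the technical heart. For distinct primes $q_1\ne q_2$ in our range, the Chinese remainder theorem gives exact independence of the $q_i$-conditions on residues modulo $q_1^2 q_2^2$ (for $\alpha$) and $q_1 q_2$ (for $\beta$); invoking Montgomery's two-dimensional large sieve inequality to control the resulting counts uniformly in $(q_1, q_2)$ then yields $\mathrm{Var}(Z)\ll E[Z]$. Chebyshev's inequality now gives
\[
\#\{(\alpha,\beta):Z(\alpha,\beta)=0\}\ \le\ (2N)^2\cdot\frac{\mathrm{Var}(Z)}{E[Z]^2}\ \ll\ \frac{N^2}{E[Z]}\ \ll\ N^{3/2}\log N,
\]
which is the claimed bound since non-special pairs are counted on the left. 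The main obstacle is the variance estimate at the critical scale $Q = N^{1/2}$: the moduli $q_1^2 q_2^2$ can be as large as $N^2$, so term\-wise CRT density approximations have errors of the same order as the main contribution and the full strength of the large sieve (rather than a term\-wise divisor sum) is required to extract the cancellation giving $\mathrm{Var}(Z)\ll E[Z]$.
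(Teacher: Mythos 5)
Your reduction to Theorem \ref{thm:extension} is the same as the paper's: a prime $q$ at which $\alpha$ is a non-Wieferich primitive root and $\beta$ is a nonzero square certifies that the pair is special. But the counting argument has a genuine gap at exactly the point you flag as ``the technical heart'': the variance bound $\mathrm{Var}(Z)\ll E[Z]$ is asserted, not proved, and the tool you invoke cannot deliver it. The non-Wieferich condition $\alpha^{q-1}\not\equiv 1\pmod{q^2}$ is a congruence condition modulo $q^2\asymp N$ imposed on $\alpha$ ranging over an interval of length $2N$; the large sieve (in any of its forms, including Montgomery's multidimensional versions) only controls congruence conditions to moduli up to roughly the square root of the interval length, i.e.\ moduli up to about $\sqrt{N}$ here. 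For moduli of size $N$, a single residue class contains $O(1)$ integers of $[-N,N]$, the density heuristic $2NR/M$ carries an error term of the same order as the main term (as you yourself observe), and no large-sieve input rescues the off-diagonal terms $E[X_{q_1}X_{q_2}]$, which involve moduli $q_1^2q_2^2\asymp N^2$. So the step ``invoking Montgomery's two-dimensional large sieve inequality \dots yields $\mathrm{Var}(Z)\ll E[Z]$'' does not go through, and with it the Chebyshev bound collapses.

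The paper avoids this obstacle by decoupling the three conditions. It first discards $O(N^{1/2}\log N)$ bad values of $\beta$ via a P\'olya--Vinogradov second-moment estimate for $\sum_q\leg{\beta}{q}$; then, for each remaining $\beta$, it applies Bombieri's large sieve only to the primitive-root condition, which lives modulo $q\le\sqrt{N}$ and is therefore within the large sieve's range, to show $\#\mathcal{P}(\alpha)\gg\sqrt{N}/\log N$ for all but $O(N^{1/2}\log N)$ values of $\alpha$. The modulo-$q^2$ (Wieferich) condition is then handled by an entirely elementary disjointness argument: the translate sets $\mathcal{A}_q(\alpha)=\{\alpha+jq\}$ attached to Wieferich pairs $(\alpha,q)$ are pairwise disjoint, because two integers congruent modulo $q$ whose $(q-1)$-st powers are both $1$ modulo $q^2$ must be congruent modulo $q^2$. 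A volume count then shows at most $O(N^{1/2})$ integers $\alpha$ can be Wieferich for every $q\in\mathcal{P}(\alpha)$. If you want to salvage your approach, you need some analogue of that disjointness device in place of the variance claim; as written, the proof is incomplete.
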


Since $\alpha,\beta$ is non-special whenever $\alpha$ is a square, the upper bound of Theorem \ref{thm:almostall} is sharp up to the log factor.

\section{Preliminaries}
The following result is often referred to as the lemma on ``Lifting the Exponent''. We choose a formulation over $\mathbb{Z}_p$, but the proof is the same as over $\Z$ (see, for example, \cite[Lemma 2.1.22, Corollary 2.1.23, p.\ 22]{cohen07}).

\begin{lem}\label{lem:LTE} Let $p$ be a prime number, and let $A \in \Z_p$  with $v_p(A-1) \ge 1$. Then
\[ v_p(A^n-1) \ge v_p(A-1) + v_p(n) \]
for every integer $n$. Furthermore, equality holds whenever $p$ is odd.
\end{lem}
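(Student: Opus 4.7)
My plan is to reduce the lemma to the case $n = p^f$ and then handle that case by a single binomial expansion.

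First I would dispose of the boundary cases. If $A = 1$ or $n = 0$ then $A^n - 1 = 0$ and both sides of the desired inequality are $+\infty$. For $n < 0$, the hypothesis $v_p(A - 1) \ge 1$ forces $v_p(A) = 0$, so $A \in \Z_p^{\times}$; then $A^n - 1 = -A^n(A^{-n} - 1)$ gives $v_p(A^n - 1) = v_p(A^{-n} - 1)$, and since $v_p(n) = v_p(-n)$ the problem reduces to $n \ge 1$. For such $n$, write $n = p^f m$ with $\gcd(m, p) = 1$ and factor
\[ A^n - 1 = (A^{p^f} - 1) \cdot \bigl(1 + A^{p^f} + A^{2 p^f} + \dots + A^{(m-1)p^f}\bigr). \]
Since $A \equiv 1 \pmod{p}$, the second factor is $\equiv m \pmod{p}$, hence a $p$-adic unit. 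Thus it suffices to prove $v_p(A^{p^f} - 1) \ge v_p(A - 1) + f$, with equality for $p$ odd.

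This I would prove by induction on $f \ge 0$, the base case $f = 0$ being trivial. For the inductive step, set $B = A^{p^{f-1}}$ and $e := v_p(B - 1)$; by induction, $e \ge v_p(A - 1) + (f - 1) \ge 1$ (with equality when $p$ is odd). Writing $B - 1 = p^e u$ with $u \in \Z_p^{\times}$, the binomial theorem gives
\[ B^p - 1 = \sum_{j=1}^{p} \binom{p}{j} (p^e u)^j. \]
The $j = 1$ term has valuation exactly $e + 1$; for $2 \le j \le p - 1$ the divisibility $p \mid \binom{p}{j}$ yields valuation $\ge 1 + 2e$; and the $j = p$ term has valuation $p e$. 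When $p$ is odd and $e \ge 1$, both $1 + 2e$ and $p e$ strictly exceed $e + 1$, so the $j = 1$ term dominates the ultrametric sum and $v_p(B^p - 1) = e + 1$. For $p = 2$ one still has $1 + 2e \ge e + 1$ and $2e \ge e + 1$, so $v_2(B^2 - 1) \ge e + 1$. Combining with the inductive bound on $e$ closes the induction in both cases.

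The only genuinely delicate step is the binomial analysis above: for $p$ odd the strict inequalities $1 + 2e > e + 1$ and $p e > e + 1$ prevent cancellation with the leading $j = 1$ contribution, whereas for $p = 2$ the terms at $j = 1$ and $j = p = 2$ can both have valuation $e + 1$ and potentially interfere. This interference is precisely the source of the possible strict inequality in the $p = 2$ case, and is the reason the lemma only promises equality when $p$ is odd.
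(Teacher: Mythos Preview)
Your argument is correct and is the standard proof of the Lifting-the-Exponent lemma. The paper itself does not give a proof: it simply notes that the proof over $\Z_p$ is identical to the classical proof over $\Z$ and cites Cohen's textbook, so there is no in-paper argument to compare against.

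One cosmetic remark: in the inductive step you write $B-1=p^{e}u$ with $u\in\Z_p^{\times}$, which tacitly assumes $B\neq 1$. For odd $p$ this is automatic once $A\neq 1$, since $1+p\Z_p$ is torsion-free. For $p=2$ the single exception is $A=-1$ with $f\ge 2$, giving $B=1$; but then $A^{p^f}-1=0$ and the desired inequality is trivially $+\infty\ge v_2(A-1)+f$. Also, for $p=2$ the range $2\le j\le p-1$ is empty, so your inequality ``$1+2e\ge e+1$'' is vacuous there---the only two terms are $j=1$ and $j=2$, and your bound $2e\ge e+1$ handles the latter. None of this affects the correctness of the proof.
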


\begin{lem}\label{lem:surjective} Let $p$ be an odd prime. Let $A \in \Z_p$ with $v_p(A-1) = t \ge 1$. For each integer $T\ge t$, and each $B \in \Z_p$ with $B\equiv 1\pmod{p^t}$, there is an integer $n$ with
\[ A^n \equiv B\pmod{p^T}. \]
\end{lem}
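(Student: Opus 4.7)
I would reframe the statement group-theoretically. Modulo $p^T$, the set of $B \in \Z_p$ with $B \equiv 1 \pmod{p^t}$ is precisely the kernel $H$ of the natural reduction $(\Z_p/p^T\Z_p)^{\times} \to (\Z/p^t\Z)^{\times}$, and $|H| = p^{T-t}$. So the lemma is equivalent to the assertion that the cyclic subgroup of $(\Z_p/p^T\Z_p)^{\times}$ generated by $A \bmod p^T$ has order exactly $p^{T-t}$; by order considerations this subgroup will then coincide with $H$.

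The plan is to pin down the order of $A \bmod p^T$ directly, via two applications of Lemma~\ref{lem:LTE}. For an upper bound on the order, the equality case of that lemma gives
\[ v_p(A^{p^{T-t}} - 1) \;=\; v_p(A-1) + v_p(p^{T-t}) \;=\; t + (T-t) \;=\; T, \]
so $A^{p^{T-t}} \equiv 1 \pmod{p^T}$. For the lower bound, fix any $n$ with $1 \le n < p^{T-t}$, and write $n = p^s m$ with $\gcd(m,p)=1$; then $s \le T-t-1$, and Lemma~\ref{lem:LTE} (applied with exponent $n$) yields $v_p(A^n - 1) = t + s \le T - 1$, so $A^n \not\equiv 1 \pmod{p^T}$. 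Together these bounds force $\langle A \bmod p^T\rangle$ to have order exactly $p^{T-t}$, and hence to equal $H$.

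\textbf{Anticipated difficulty.} There is no real obstacle here; the argument is essentially two bookkeeping calculations with $p$-adic valuations. The only point that requires care is the equality assertion in Lemma~\ref{lem:LTE}, which is where the hypothesis that $p$ is odd enters the proof --- the inequality alone would give only an upper bound on the order of $A \bmod p^T$, not the lower bound needed to rule out nontrivial relations among $A,A^2,\ldots,A^{p^{T-t}-1}$.
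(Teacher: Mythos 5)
Your argument is correct and is essentially the paper's own proof: both pin down the order of $A$ modulo $p^T$ as exactly $p^{T-t}$ via the equality case of Lemma~\ref{lem:LTE} (this is where oddness of $p$ enters), and then conclude by counting that $\langle A \bmod p^T\rangle$ must fill out the full set of residues congruent to $1 \bmod p^t$. The only cosmetic difference is that the paper phrases the order computation as a congruence criterion for when $A^n \equiv A^{n'} \pmod{p^T}$, while you phrase it as upper and lower bounds on the order; the content is identical.
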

\begin{proof} Since $v_p(A-1)\ge 1$, we have $v_p(A)=0$, so that $A$ is invertible in $\Z_p$. For every pair of integers $n$ and $n'$,
\[ A^n \equiv A^{n'} \pmod{p^{T}} \Longleftrightarrow A^{n-n'} \equiv 1\pmod{p^T} \Longleftrightarrow n \equiv n' \pmod{p^{T-t}}, \]
using Lemma \ref{lem:LTE} for the final equivalence. It follows that $A$ mod $p^T$ generates a subgroup of $(\Z_p/p^T \Z_p)^\times$ of order $p^{T-t}$. Since each power of $A$ is congruent to $1$ mod $p^t$, and there are only $p^{T-t}$ elements mod $p^T$ congruent to $1 \bmod p^t$, the result follows.
\end{proof}

The proof of Theorem \ref{thm:extension} relies crucially on the quadratic reciprocity law. Let $d$ be a nonzero integer. If $\gamma \in \mathbb{Q}^{\times}$ and $v_p(\gamma)=0$ for all $p\mid 2d$, we define the generalized Jacobi symbol
\[ \leg{d}{\gamma} := \prod_{p:~v_p(\gamma)\ne 0} \leg{d}{p}^{v_p(\gamma)},   \]
where each $\leg{d}{p}$ is a Legendre symbol.

\begin{lem}\label{lem:QR} Let $d$ be a nonzero integer, and let $\eta \in \Q_{>0}$. Suppose that
\[ v_p(\eta-1) \ge 1 \qquad\text{for all odd primes $p\mid d$} \]
and that
\[ v_2(\eta-1) \ge 3. \]
Then $\leg{d}{\eta}=1$.
\end{lem}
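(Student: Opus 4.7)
The plan is to reduce the claim $\leg{d}{\eta}=1$ to an identity between ordinary Jacobi symbols, to be verified by quadratic reciprocity and the supplementary laws. First, I would write $\eta = a/b$ in lowest terms with $a,b>0$. From $v_2(\eta-1)\ge 3>0$ one gets $v_2(\eta)=0$, so both $a$ and $b$ must be odd; then $v_2(\eta-1) = v_2(a-b)$ yields $a\equiv b\pmod{8}$. Similarly, for each odd prime $p\mid d$, $v_p(\eta-1)\ge 1$ forces $v_p(a)=v_p(b)=0$ and $a\equiv b\pmod{p}$. In particular $\gcd(ab,2d)=1$, which will allow free use of the ordinary Jacobi symbol below.

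Next I would unwind the definition of the generalized Jacobi symbol:
\[ \leg{d}{\eta} = \prod_{p} \leg{d}{p}^{v_p(a)-v_p(b)} = \leg{d}{a}\leg{d}{b}^{-1} = \leg{d}{a}\leg{d}{b}, \]
the last equality because Jacobi symbols are $\pm 1$. So it suffices to prove $\leg{d}{a}=\leg{d}{b}$.

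For the main step, factor $d = \epsilon\cdot 2^e \cdot d'$ with $\epsilon\in\{\pm 1\}$, $e\ge 0$, and $d'$ odd positive. For $n$ odd and coprime to $d$,
\[ \leg{d}{n} = \leg{-1}{n}^{(1-\epsilon)/2} \leg{2}{n}^{e} \leg{d'}{n}. \]
The first two factors depend only on $n\bmod 8$ and therefore coincide at $n=a$ and $n=b$. For the third, quadratic reciprocity gives $\leg{d'}{n} = (-1)^{\frac{n-1}{2}\frac{d'-1}{2}} \leg{n}{d'}$; the sign depends only on $n\bmod 4$, while $\leg{n}{d'}=\prod_{p\mid d'}\leg{n}{p}^{v_p(d')}$ depends only on the residues $n\bmod p$ for $p\mid d'$. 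All of these quantities are forced to agree between $a$ and $b$ by the congruences established in the first step, so $\leg{d}{a}=\leg{d}{b}$.

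The only real obstacle is the careful bookkeeping of the reciprocity and supplementary signs; no single step is deep. Conceptually, the hypothesis $v_2(\eta-1)\ge 3$ provides exactly the $8$-adic input needed to neutralize the factors of $\leg{2}{\cdot}$ and $\leg{-1}{\cdot}$, while $v_p(\eta-1)\ge 1$ for odd $p\mid d$ handles the odd part $d'$.
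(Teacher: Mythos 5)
Your proof is correct and follows essentially the same route as the paper's: both reduce the claim to showing that the quadratic character attached to $d$ takes the same value on the numerator and denominator of $\eta$ (written in lowest terms, coprime to $2d$), these being congruent modulo $8$ and modulo every odd prime dividing $d$. The only difference is cosmetic --- the paper cites the fact that the Kronecker symbol $\leg{\Delta}{\cdot}$, with $\Delta$ the discriminant of $\Q(\sqrt{d})$, is a Dirichlet character modulo $|\Delta|$, whereas you re-derive exactly that periodicity by hand from Jacobi reciprocity and the supplementary laws.
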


\begin{proof} This is a consequence of quadratic reciprocity, viewed as a special case of Artin's reciprocity law. A down-to-earth proof is as follows: We may write $\eta=P/Q$ where $P, Q$ are positive integers coprime to each other and each coprime to $2d$. Then $\leg{d}{\eta} = \leg{d}{P}\leg{d}{Q} = \leg{\Delta}{P}\leg{\Delta}{Q}$, where $\Delta$ is the discriminant of $\Q(\sqrt{d})$. The conditions on the valuations of $\eta-1$ imply that $P\equiv Q\pmod{\Delta}$. The desired result follows from recalling that the Kronecker symbol $\leg{\Delta}{\cdot}$ is a Dirichlet character modulo $|\Delta|$ (see \cite[Theorem 2.2.9, p.\ 38]{cohen07}), so that $\leg{\Delta}{P}=\leg{\Delta}{Q}$ and $\leg{\Delta}{P}\leg{\Delta}{Q} = (\pm 1)^2 = 1$.
\end{proof}

Finally, we recall Bombieri's form of the arithmetic large sieve (see \cite[Theorem 1]{bombieri65}).

\begin{prop}\label{prop:largesieve} Let $N$ be a positive integer, and let $\mathcal{N}$ be a subset of $[-N,N]$. Set $Z:=\#\mathcal{N}$. For each prime $p \le \sqrt{N}$ and each integer $h$, let $Z(p,h) = \#\{n \in \mathcal{N}: n\equiv h\pmod{p}\}$. Then
\[ \sum_{p \le \sqrt{N}} p \sum_{h\bmod{p}}\left(Z(p,h) - \frac{Z}{p}\right)^2 \ll NZ.\]
\end{prop}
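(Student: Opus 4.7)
The plan is to deduce this arithmetic large sieve from its analytic (harmonic) counterpart, which asserts that for any complex sequence $(a_n)_{|n|\le N}$ and any real numbers $\theta_1,\dots,\theta_R$ that are $\delta$-separated modulo $1$,
\[
\sum_{r=1}^{R}\Bigl|\sum_{|n|\le N} a_n\, e(n\theta_r)\Bigr|^2 \le (2N+\delta^{-1})\sum_{n}|a_n|^2,
\]
where $e(x):=\exp(2\pi i x)$. Taking $a_n=\mathbf{1}_{\mathcal N}(n)$, we have $\sum_n|a_n|^2=Z$, so the right-hand side will ultimately be accessed through exponential sums of this indicator at fractions with prime denominator $\le\sqrt{N}$.

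First I would apply discrete Fourier analysis on $\mathbb{Z}/p\mathbb{Z}$. Set $S(p,a):=\sum_{n\in\mathcal N} e(an/p)$, so $S(p,0)=Z$. Fourier inversion mod $p$ gives
\[
Z(p,h)=\frac{1}{p}\sum_{a\bmod p} e(-ah/p)\, S(p,a),
\]
and isolating the $a=0$ term yields $Z(p,h)-Z/p = \frac{1}{p}\sum_{a=1}^{p-1} e(-ah/p)\, S(p,a)$. Squaring, summing over $h\bmod p$, and applying the orthogonality relation $\sum_{h\bmod p} e((a'-a)h/p) = p\cdot\mathbf{1}_{a=a'}$ yields the Plancherel identity
\[
p\sum_{h\bmod p}\left(Z(p,h)-\frac{Z}{p}\right)^{\!2}=\sum_{a=1}^{p-1}|S(p,a)|^2.
\]

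Next I would sum over primes $p\le\sqrt{N}$ and invoke the Farey separation: distinct fractions $a/p$ and $a'/p'$ (with $p,p'\le\sqrt{N}$ prime, $1\le a\le p-1$, $1\le a'\le p'-1$) satisfy $\|a/p-a'/p'\|\ge 1/N$ on $\mathbb{R}/\mathbb{Z}$, since $|ap'-a'p|\ge 1$ and the natural wraparound estimate is $\ge 1/\max(p,p')\ge 1/\sqrt{N}$. Applying the analytic large sieve to this $\delta$-separated family with $\delta=1/N$ bounds the summed left-hand side by $(2N+N)Z\ll NZ$, which is exactly the claimed inequality.

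The main obstacle is of course the analytic large sieve itself. Several classical routes are available: Selberg's method, which constructs an entire majorant of $\mathbf{1}_{[-N,N]}$ of exponential type $2\pi\delta$ and invokes Parseval for the Fourier transform; or the Montgomery--Vaughan duality argument, which derives the inequality from Hilbert's inequality $\bigl|\sum_{r\ne s}\bar{c}_r c_s/(\theta_r-\theta_s)\bigr|\le \pi\delta^{-1}\sum_r|c_r|^2$. Either approach yields a constant of the form $2N+\delta^{-1}$ (or a harmless numerical variant), and once this is in hand, the Plancherel identity and Farey-separation steps above are completely routine.
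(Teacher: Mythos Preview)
Your argument is correct: the Plancherel identity on $\mathbb{Z}/p\mathbb{Z}$ converts the variance $p\sum_{h}(Z(p,h)-Z/p)^2$ into $\sum_{a=1}^{p-1}|S(p,a)|^2$, the Farey spacing $\|a/p-a'/p'\|\ge 1/(pp')\ge 1/N$ for distinct reduced fractions with prime denominators $\le\sqrt N$ is exactly as you state, and the analytic large sieve with $\delta=1/N$ then gives the bound $(2N+N)Z\ll NZ$.

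There is nothing to compare against, however: the paper does not supply a proof of this proposition at all. It is quoted as a known result, with a reference to Bombieri's 1965 paper for the proof. So your write-up goes beyond what the paper does here; it is a standard and correct derivation, but strictly speaking the ``paper's own proof'' is simply a citation.
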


\section{Proof of Theorem \ref{thm:extension}}
By replacing $\alpha$ with $1/\alpha$, we can (and will) assume that $|\alpha| > 1$. Our task is to show that the set
\[ \mathcal{P} = \{\text{odd primes }p: v_p(\alpha)=v_p(\beta)=0, \leg{\alpha}{p}=-1, \beta\bmod{p}\in \langle\alpha^2\bmod{p}\rangle\} \]
is infinite.

We suppose for a contradiction that $\mathcal{P}$ is finite. Note that $\mathcal{P}$ is nonempty since $q \in \mathcal{P}$.

Since the details of the proof are somewhat intricate, we first give the basic idea. Let $d$ be the unique squarefree integer differing from $\alpha$ by the square of a rational number. We will consider the generalized Jacobi symbol $\leg{d}{\eta}$, where $\eta:=\frac{\alpha^{2K'}-\beta}{\alpha^{2K}-\beta}$ for well-chosen integers $K$ and $K'$. On the one hand, the reciprocity law in the form of Lemma \ref{lem:QR} will imply that $\leg{d}{\eta}=1$. On the other hand, by controlling the power of $q$ appearing in the factorizations of $\alpha^{2K'}-\beta$ and $\alpha^{2K}-\beta$, we will be able, using $\leg{d}{q} = \leg{\alpha}{q}=-1$, to force  $\leg{d}{\eta}=-1$.

Now down to business. Put $e := v_q(\alpha^{q-1}-1)$. Setting
\[ \ell = \ord_q(\alpha^2), \]
we observe that $\ell \mid \frac{q-1}{2}$. By Lemma \ref{lem:LTE}, \begin{align*} e &= v_q((\alpha^{2\ell})^{(q-1)/2\ell}-1) \\
&= v_q(\alpha^{2\ell}-1).
\end{align*}
By assumption (iii), we may choose an integer $k$ with
\[ v_q(\alpha^{2k}-\beta)\ge e. \]
Adjusting $k$ by a multiple of $\phi(q^{e})$, we can assume that $k$ is large. Concretely, we assume that
\[ \alpha^{2k}-\beta > 0 \]
and that
\begin{equation}\label{eq:klarge} k \ge 3 + \max_{p} |v_p(\beta)|, \end{equation}
where the maximum is taken over all primes $p$.

Let
\[ e' = v_q(\alpha^{2k}-\beta), \]
and define
\[ M = q^{e'} \prod_{\substack{p \in \mathcal{P} \\ p \ne q}} (p-1) \prod_{p:~v_p(\alpha)\ne 0} (p-1).  \]
Put
\[ e'' = v_q(M), \]
so that $e'' \ge e' \ge e$. Viewing the rational numbers with denominator prime to $q$ as embedded in $\Z_q$, we may write
\[ \alpha^{2k} = \beta(1 + q^{e'} \epsilon), \]
where $\epsilon \in \Z_q^{\times}$. Choose a positive integer $n_0$ such that
\[ (\alpha^{2\ell})^{n_0} \equiv \frac{1+q^{e + e''}}{1+q^{e'}\epsilon} \pmod{q^{e+e''+1}}; \]
this is possible by Lemma \ref{lem:surjective} (with $A = \alpha^{2\ell})$, since $v_q(\alpha^{2\ell}-1)=e$ and $\frac{1+q^{e+e''}}{1+q^{e'}\epsilon} \equiv 1\pmod{q^{e}}$. Then
\[ v_q(\alpha^{2(k+\ell n_0)}-\beta) = e+e''. \]
We set $K=k+\ell n_0$; this is the integer $K$ described in  our initial proof summary. We proceed to choose $K'$.

For $p\in \mathcal{P}$, $p\ne q$, we let $f_p$ be the integer determined by the factorization
\[ \alpha^{2(k+\ell n_0)}-\beta = \Bigg(q^{e+e''} \prod_{\substack{p \in \mathcal{P} \\ p \ne q}} p^{f_p}\Bigg)R, \]
where $v_p(R)=0$ for all $p \in \mathcal{P}$. If $v_2(\alpha)=v_2(\beta)=0$, we further define
\[ f_2 = v_2(\alpha^{2(k+\ell n_0)}-\beta). \]
Let
\[ N = 2M \bigg(\prod_{\substack{p \in \mathcal{P} \\ p \ne q}}p^{f_p} \bigg)\cdot \widetilde{2^{f_2}},\]
where the tilde indicates that the final term is included only when $v_2(\alpha)=v_2(\beta)=0$. Then $v_q(N) = v_q(M) = e''$, so that by Lemma \ref{lem:LTE},
\begin{align*}
v_q(\alpha^{2\ell N}-1) &= v_q(\alpha^{2\ell}-1) + v_q(N)  \\
&= e + e''.
\end{align*}
Now writing $$ \alpha^{2(k+\ell n_0)} = \beta (1+q^{e+e''}\gamma),$$ where $\gamma \in \Z_{q}^{\times}$, we use Lemma \ref{lem:surjective} (with $A=\alpha^{2\ell N}$) to choose a positive integer $n_1$ such that
\[ \alpha^{2\ell N n_1} \equiv \frac{1 + q^{e+e''+1}}{1+q^{e+e''}\gamma} \pmod{q^{e+e''+2}}. \]
(This is possible, since the right-hand side of the congruence is $1$ mod $q^{e+e''}$.) Then
\[ v_q(\alpha^{2(k+\ell n_0 + \ell N n_1)}-\beta) = e+e''+1.\]
We take $K':= k+\ell n_0 + \ell N n_1$.

We proceed to compare the factorizations of $\alpha^{2K}-\beta$ and $\alpha^{2K'}-\beta$. Let $p\in \mathcal{P}\setminus\{q\}$, allowing also $p=2$ if $v_2(\alpha)=v_2(\beta)=0$. By construction, $\phi(p^{f_p+1}) \mid N$. Thus, $\alpha^{2\ell N n_1} \equiv 1\pmod{p^{f_p+1}}$, and so
\begin{align*}
v_p(\alpha^{2(k+\ell n_0 + \ell N n_1)}-\beta) = v_p((\alpha^{2 \ell N n_1} - 1)\alpha^{2(k+\ell n_0)} + (\alpha^{2(k+\ell n_0)}-\beta)) = v_p(\alpha^{2(k+\ell n_0)}-\beta)
\end{align*}
by the ultrametric inequality, since
\[ f_p = v_p(\alpha^{2(k+\ell n_0)}-\beta) < v_p(\alpha^{2 \ell N n_1} - 1). \]
It follows that the ratio
\begin{equation}\label{eq:ratioexpr} \frac{\alpha^{2K'}-\beta}{\alpha^{2K}-\beta} =  \frac{\alpha^{2(k+\ell n_0 + \ell N n_1)}-\beta}{\alpha^{2(k+\ell n_0)} -\beta} = q S, \end{equation}
where $v_p(S)=0$ for all $p \in \mathcal{P}$, $p\ne q$, and also $v_2(S)=0$ when $v_2(\alpha)=v_2(\beta)=0$.

Let us examine the factorization of $S$. Let $p$ be a prime, and suppose to start with that $v_p(\alpha)< 0$. By the ultrametric inequality and \eqref{eq:klarge}, $v_p(\alpha^{2K'}-\beta) = 2K' v_p(\alpha)$ while $v_p(\alpha^{2K}-\beta) = 2K v_p(\alpha)$. Hence,
\[ v_p(S) = 2(K'-K) v_p(\alpha) = 2\ell N n_1 v_p(\alpha). \]
If instead $v_p(\alpha) > 0$, then $v_p(\alpha^{2K'}-\beta) = v_p(\beta) = v_p(\alpha^{2K}-\beta)$, so that $v_p(S)=0$. Finally, suppose that $v_p(\alpha)=0$. If $v_p(\beta) > 0$, then $v_p(\alpha^{2K'}-\beta) = 0 = v_p(\alpha^{2K}-\beta)$, while if $v_p(\beta) < 0$, then $v_p(\alpha^{2K'}-\beta) = v_p(\beta) = v_p(\alpha^{2K}-\beta)$; in both cases, $v_p(S)=0$. Hence, if $v_p(\alpha)=0$ and $v_p(S) \ne 0$, then $v_p(\beta)=0$. If $v_p(\alpha)=v_p(\beta)=0$ and $v_p(S) \ne 0$, we know from the last paragraph that $p$ is odd and not in $\mathcal{P}$. Since $v_p(S)\ne 0$, it must be that $v_p(\alpha^{2K'}-\beta) > 0$ or $v_p(\alpha^{2K}-\beta) > 0$; in either case, $\beta\bmod{p}$ lands in the multiplicative group generated by $\alpha^2\bmod{p}$. So in order to have $p\notin \mathcal{P}$, it must be that $\leg{\alpha}{p}=1$. We conclude (since $S>0$) that $S$ admits a factorization
\[ S =  \prod_{p:~v_p(\alpha) < 0} p^{2\ell N n_1 v_p(\alpha)} \prod_{\substack{p\text{ odd} \\ v_p(\alpha)=v_p(\beta)=0 \\ \leg{\alpha}{p}=1}} p^{v_p(S)}. \]

Comparing the last display with \eqref{eq:ratioexpr}, we find upon rearranging that
\[ \frac{(\alpha \prod_{p:~v_p(\alpha)<0} p^{-v_p(\alpha)})^{2K'}-\beta (\prod_{p:~v_p(\alpha)<0} p^{-v_p(\alpha)})^{2K'}}{(\alpha \prod_{p:~v_p(\alpha)<0} p^{-v_p(\alpha)})^{2K}-\beta (\prod_{p:~v_p(\alpha)<0} p^{-v_p(\alpha)})^{2K}} = q \prod_{\substack{p\text{ odd} \\ v_p(\alpha)=v_p(\beta)=0 \\ \leg{\alpha}{p}=1}} p^{v_p(S)}. \]
For notional convenience, we let $\eta$ denote the fractional expression on the left-hand side, and we write $\mu, \nu$ for its numerator and denominator, respectively.

Recall that $d$ is the squarefree integer for which $d (\Q^{\times})^2=\alpha (\Q^{\times})^2$. Notice that with this choice of $d$, whenever $p$ is a prime for which $v_p(\alpha)=0$, also $v_p(d)=0$ and (if $p$ is odd) $\leg{\alpha}{p} = \leg{d}{p}$.

We evaluate $\leg{d}{\eta}$ in two different ways. On the one hand, since $\leg{\alpha}{q}=-1$,
\[ \leg{d}{\eta} = \leg{d}{q} \prod_{\substack{p\text{ odd} \\ v_p(\alpha)=v_p(\beta)=0 \\ \leg{\alpha}{p}=1}} \leg{d}{p}^{v_p(S)} = \leg{\alpha}{q} \prod_{\substack{p\text{ odd} \\ v_p(\alpha)=v_p(\beta)=0 \\ \leg{\alpha}{p}=1}} \leg{\alpha}{p}^{v_p(S)} = -1. \]

On the other hand, we can show $\leg{d}{\eta}=1$ by verifying the conditions of Lemma \ref{lem:QR}. Let $p$ be an odd prime dividing $d$; then either $v_p(\alpha)< 0$ or $v_p(\alpha) > 0$.  Suppose first that
$v_p(\alpha) < 0$. Then the ultrametric inequality and  \eqref{eq:klarge} imply that $v_p(\mu)=v_p(\nu) = 0$. Write
\begin{multline}\label{eq:munudiff} \nu - \mu = {\bigg(\alpha \prod_{p:~v_p(\alpha)<0} p^{-v_p(\alpha)}\bigg)^{2K} \bigg((\alpha \prod_{p:~v_p(\alpha)<0} p^{-v_p(\alpha)})^{2\ell N n_1} -1\bigg)} \\ + \beta \bigg(\prod_{p:~v_p(\alpha) < 0} p^{-v_p(\alpha)}\bigg)^{2K} \bigg(1-(\prod_{p:~v_p(\alpha) < 0} p^{-v_p(\alpha)})^{2\ell N n_1}\bigg).
\end{multline}
This difference has positive $p$-adic valuation: Indeed, since $p-1\mid M\mid N$,
\[ v_p((\alpha \prod_{p:~v_p(\alpha)<0} p^{-v_p(\alpha)})^{2\ell N n_1}-1)\ge 1 \]
while also (from \eqref{eq:klarge})
\[ v_p\bigg(\beta \bigg(\prod_{p:~v_p(\alpha) < 0} p^{-v_p(\alpha)}\bigg)^{2K}\bigg)  = v_p(\beta) + 2K |v_p(\alpha)| > v_p(\beta) + k > 0. \]
Hence, $v_p(\eta-1) = v_p(\frac\mu\nu -1)=v_p(\mu - \nu) > 0$. Suppose instead that $v_p(\alpha) > 0$. In this case, $v_p(\nu)= v_p(\beta)$. Also,
\[ v_p\bigg(\bigg(\alpha \prod_{p:~v_p(\alpha)<0} p^{-v_p(\alpha)}\bigg)^{2K}\bigg) = 2K v_p(\alpha) > k > v_p(\beta) \]
while
\begin{align*} v_p\bigg(\beta \bigg(1-(\prod_{p:~v_p(\alpha) < 0} p^{-v_p(\alpha)})^{2\ell N n_1}\bigg)\bigg) &= v_p(\beta) + v_p\bigg(1-(\prod_{p:~v_p(\alpha) < 0} p^{-v_p(\alpha)})^{2\ell N n_1}\bigg) \\
&> v_p(\beta), \end{align*}
using for the last inequality that $p-1\mid M \mid N$. From \eqref{eq:munudiff}, $v_p(\mu-\nu) > v_p(\beta)$ and $v_p(\eta-1) = v_p(\mu-\nu) - v_p(\nu) = v_p(\mu-\nu)-v_p(\beta) \ge 1$.

It remains (only) to check that $v_2(\eta-1)\ge 3$. If $v_2(\alpha)> 0$, then $v_2(\nu) = v_2(\beta)$. Moreover,
\[ v_2\bigg(\bigg(\alpha \prod_{p:~v_p(\alpha)<0} p^{-v_p(\alpha)}\bigg)^{2K}\bigg) = 2K v_2(\alpha) > k \ge 3+v_2(\beta)\]
by \eqref{eq:klarge}, while
\begin{align*} v_2\bigg(\beta \bigg(1-(\prod_{p:~v_p(\alpha) < 0} p^{-v_p(\alpha)})^{2\ell N n_1}\bigg)\bigg) &= v_2(\beta) + v_2\bigg(1-(\prod_{p:~v_p(\alpha) < 0} p^{-v_p(\alpha)})^{2\ell N n_1}\bigg) \\
&\ge 3 + v_2(\beta), \end{align*}
using for the last inequality that squares of $2$-adic units are 1 mod $8$. So from \eqref{eq:munudiff}, $v_2(\mu-\nu) \ge 3+v_2(\beta)$ and $v_2(\eta-1) \ge 3$. Now suppose that $v_2(\alpha)< 0$. Then $v_2(\mu) = v_2(\nu) = 0$. Also,
    \[ v_2\bigg((\alpha \prod_{p:~v_p(\alpha)<0} p^{-v_p(\alpha)})^{2\ell N n_1} -1\bigg) \ge 3 \]
while
\[ v_2\bigg(\beta \bigg(\prod_{p:~v_p(\alpha) < 0} p^{-v_p(\alpha)}\bigg)^{2K}\bigg)  = v_2(\beta) + 2K |v_2(\alpha)| > v_2(\beta) + k \ge 3. \]
Hence, $v_2(\eta-1) = v_2(\mu-\nu) \ge 3$ in this case. Finally, suppose that $v_2(\alpha)=0$. Write
\[ \eta = \frac{\alpha^{2K'}-\beta}{\alpha^{2K}-\beta} \prod_{p:~v_p(\alpha) < 0} p^{-2\ell N n_1 v_p(\alpha)}, \]
so that
\begin{multline*} \eta-1 = \left(\frac{\alpha^{2K'}-\beta}{\alpha^{2K}-\beta}-1\right) \left(\prod_{p:~v_p(\alpha) < 0} p^{-2\ell N n_1 v_p(\alpha)}-1\right)\\+\left(\prod_{p:~v_p(\alpha) < 0} p^{-2\ell N n_1 v_p(\alpha)}-1\right) +\left(\frac{\alpha^{2K'}-\beta}{\alpha^{2K}-\beta}-1\right). \end{multline*}
Since $v_2(\prod_{p:~v_p(\alpha) < 0} p^{-2\ell N n_1 v_p(\alpha)}-1) \ge 3$, the desired inequality $v_2(\eta-1)\ge 3$ will follow once it is shown that $v_2(\frac{\alpha^{2K'}-\beta}{\alpha^{2K}-\beta}-1)\ge 3$.  We have
\[ v_2\left(\frac{\alpha^{2K'}-\beta}{\alpha^{2K}-\beta}-1\right) = v_2(\alpha^{2(K'-K)}-1) - v_2(\alpha^{2K}-\beta).
\]
If $v_2(\beta) < 0$ or $v_2(\beta) > 0$, then $v_2(\alpha^{2k}-\beta) = v_2(\beta)$ or $v_2(\alpha^{2k}-\beta)=0$ (respectively); in either case, the inequality $v_2(\frac{\alpha^{2K'}-\beta}{\alpha^{2K}-\beta}-1)\ge 3$ follows from $v_2(\alpha^{2(K'-K)}-1) \ge 3$. So we may suppose that $v_2(\beta)=0$. In this case, we use that $2^{f_2} \mid N$. (Recall that $f_2 = v_2(\alpha^{2K}-\beta)$.) Since
\[ v_2(\alpha^{2\ell n_1}-1) \ge 3, \]
Lemma \ref{lem:LTE} (with $A = \alpha^{2\ell n_1}$) shows that
\[ v_2(\alpha^{2(K'-K)} -1) = v_2(\alpha^{2\ell N n_1}-1) \ge 3 + v_2(N) \ge 3 + f_2, \]
so that
\[ v_2(\alpha^{2(K'-K)}-1) - v_2(\alpha^{2K}-\beta) \ge (3+f_2) - f_2 = 3. \]
This completes the proof of Theorem \ref{thm:extension}.

\section{Proof of Theorem \ref{thm:almostall}}
We start with an overview of the proof, deferring details to \S\S\ref{sec:boundingB}--\ref{sec:part3} below.

Let $N$ be an integer, $N\ge 2$. Let $\mathcal{B}$ denote the collection of nonzero integers $\beta \in [-N,N]$ that \textbf{do not} obey the following hypothesis.

\begin{hyp}\label{hyp:b} There are more than $ \frac{1}{4}\sqrt{N}/\log{N}$ primes $q\in (\frac{1}{2}\sqrt{N},\sqrt{N}]$ with $\leg{\beta}{q}=1$.
\end{hyp}

For large enough $N$, one expects that $\mathcal{B}$ is empty (as would follow from GRH). Proving $\mathcal{B}$ is empty seems quite difficult (see  \cite{DKSZ20} for a discussion of related problems), but for our purposes it is enough to know that $\mathcal{B}$ is fairly small. We prove in \S\ref{sec:boundingB} that
\begin{equation}\label{eq:bbound} \#\mathcal{B} \ll N^{1/2}\log{N}. \end{equation}

Now fix an integer $\beta$ with $1 < |\beta| \le N$ and $\beta\notin \mathcal{B}$. For each integer $\alpha$, let $\mathcal{P}(\alpha)$ denote the set of odd primes $q \in (\frac{1}{2}\sqrt{N}, \sqrt{N}]$ for which
\begin{enumerate}
    \item $\leg{\beta}{q}=1$,
    \item $\alpha$ is a primitive root modulo $q$.
\end{enumerate}

If there is any $q \in \mathcal{P}(\alpha)$ for which $v_q(\alpha^{q-1}-1) = 1$, we can deduce from Theorem \ref{thm:extension} (applied with this $q$) that $\alpha,\beta$ is special.  Indeed, conditions (i) and (ii) of Theorem \ref{thm:extension} are obvious. Since $\alpha^2$ generates the subgroup of nonzero squares mod $q$, we can choose $k$ with $\alpha^{2k}\equiv \beta \pmod{q}$. Then $v_q(\alpha^{2k}-\beta) \ge 1 = v_q(\alpha^{q-1}-1)$, and so we have (iii).

In \S\S\ref{sec:part2}--\ref{sec:part3} we show there is a $q \in \mathcal{P}(\alpha)$ with $v_q(\alpha^{q-1}-1) = 1$ for every $\alpha \in [-N,N]$, apart from at most $O(N^{1/2}\log{N})$ exceptions. Putting this together with our bound \eqref{eq:bbound} on $\#\mathcal{B}$, we find that the number of non-special integer pairs $\alpha,\beta$ with $1 < |\alpha|, |\beta| \le N$ is
\[ \ll N \cdot N^{1/2}\log{N} + N^{1/2}\log{N} \cdot N \ll N^{3/2}\log{N}, \]
as claimed in Theorem \ref{thm:almostall}.

\subsection{Bounding $\#\mathcal{B}$}\label{sec:boundingB} Let $Q = \pi(\sqrt{N}) - \pi(\frac{1}{2}\sqrt{N})$, so that $Q \sim \sqrt{N}/\log{N}$ as $N\to\infty$. For each integer $\beta$, put $S_{\beta} = \sum_{q \in (\frac{1}{2}\sqrt{N},\sqrt{N}]} \leg{\beta}{q}$.
If $\beta\in \mathcal{B}$, then (assuming $N$ is large)
\begin{align*} S_{\beta} + Q =& \sum_{q \in (\frac{1}{2}\sqrt{N},\sqrt{N}]} \left(1+\leg{\beta}{q}\right) \le 2 \cdot \frac{1}{4}\sqrt{N}/\log{N} + \#\{q \in (\frac{1}{2}\sqrt{N}, \sqrt{N}]: q \mid \beta\} \\
&\le \frac{1}{2}\sqrt{N}/\log{N} + 2 < \frac{2}{3}Q,
\end{align*}
so that $S_{\beta} < -\frac{1}{3}Q$. It follows that
\begin{equation}\label{eq:2ndmoment} \sum_{|\beta| \le N} S_{\beta}^2 \gg Q^2  \#\mathcal{B}. \end{equation}
On the other hand, $\sum_{|\beta|\le N} S_{\beta}^2 = S_1 + S_2$, where
\[ S_1 = \sum_{q\in (\frac{1}{2}\sqrt{N},\sqrt{N}]} \sum_{|\beta| \le N} \leg{\beta}{q}^2,\quad\text{and}\quad S_2 = \sum_{|\beta| \le N} \sum_{\substack{q_1, q_2 \in (\frac{1}{2}\sqrt{N},\sqrt{N}] \\ q_1 \ne q_2}}  \leg{\beta}{q_1 q_2}.\]
When $q_1\ne q_2$, the Jacobi symbol $\leg{\cdot}{q_1 q_2}$ is a nontrivial character mod $q_1 q_2$, and the P\'olya--Vinogradov inequality yields
\[ S_2 = \sum_{\substack{q_1, q_2 \in (\frac{1}{2}\sqrt{N},\sqrt{N}] \\ q_1 \ne q_2}} \sum_{|\beta| \le N} \leg{\beta}{q_1 q_2} \ll \sum_{\substack{q_1, q_2 \in (\frac{1}{2}\sqrt{N},\sqrt{N}] \\ q_1 \ne q_2}} \sqrt{q_1 q_2} \log(q_1 q_2) \ll Q^2 N^{1/2}\log{N}. \]
Trivially, $S_1 \ll QN$, and a straightforward check shows $QN \ll Q^{2} N^{1/2}\log{N}$. Hence, $\sum_{|\beta| \le N} S_{\beta}^2 \ll Q^2 N^{1/2}\log{N}$. The desired bound $\#\mathcal{B} \ll N^{1/2}\log{N}$ follows by comparison with \eqref{eq:2ndmoment}.

\subsection{Estimating $\#\mathcal{P}(\alpha)$}\label{sec:part2}  For the remainder of the proof, $\beta$  is a fixed integer with $1< |\beta| \le N$ and $\beta\notin\mathcal{B}$.

Recall that $\mathcal{P}(\alpha)$ denotes the set of primes $q \in (\frac{1}{2}\sqrt{N},N]$ for which $\leg{\beta}{q}=1$ and $\alpha$ is  a primitive root mod $q$. Let us argue that there is an absolute constant $c_1 > 0$ such that
\[ \#\mathcal{P}(\alpha) > c_1 \sqrt{N}/\log{N} \]
for all but $O(N^{1/2} \log{N})$ integers $\alpha \in [-N,N]$.



Write $\mathcal{P}_0$ for the set of primes $q\in (\frac{1}{2}\sqrt{N},\sqrt{N}]$ with $\leg{\beta}{q}=1$. Since $\beta\notin\mathcal{B}$, we have $\#\mathcal{P}_0 > \frac{1}{4}\sqrt{N}/\log{N}$. It will be helpful momentarily to have a lower bound on $\sum_{q \in \mathcal{P}_0} \phi(q-1)/q$. For this we borrow an argument of Gallagher \cite[p. 17]{gallagher67}: By Cauchy--Schwarz,
\[ \sum_{q \in \mathcal{P}_0} \frac{\phi(q-1)}{q} \sum_{q \in \mathcal{P}_0} \frac{q}{\phi(q-1)} \ge (\#\mathcal{P}_0)^2  > \frac{1}{16} N/(\log{N})^2,\]
while (see eq.\ (10) of \cite{gallagher67}) for a certain absolute positive constant $c_2$,
\[ \sum_{q \in \mathcal{P}_0} \frac{q}{\phi(q-1)} \le 2\sum_{q \le \sqrt{N}} \frac{q-1}{\phi(q-1)} \le c_2 \sqrt{N}/\log{N}. \]
Hence, $\sum_{q \in \mathcal{P}_0} \phi(q-1)/q \ge c_3 \sqrt{N}/\log{N}$ for an absolute constant $c_3 > 0$.


Let $\mathcal{N}$ be the set of integers $\alpha \in [-N,N]$ with $\#\mathcal{P}(\alpha) \le \frac{1}{2}c_3 \sqrt{N}/\log{N}$. We apply the large sieve (Proposition \ref{prop:largesieve}) to estimate $Z = \#\mathcal{N}$. Observe that
\begin{align*} \sum_{q \in \mathcal{P}_0} \sum_{\substack{h\bmod{q} \\ h \text{ generates $\mathbb{F}_q^{\times}$}}} Z(q,h) &=
\sum_{q \in \mathcal{P}_0} \sum_{\alpha \in \mathcal{N}} 1_{\alpha\text{ prim. root mod $q$}} \\
&= \sum_{\alpha \in \mathcal{N}} \#\mathcal{P}(\alpha) \le \frac{1}{2}c_3 Z \sqrt{N}/\log{N}.
\end{align*}
On the other hand,
\[ \sum_{q \in \mathcal{P}_0} \sum_{\substack{h\bmod{q} \\ h \text{ generates $\mathbb{F}_q^{\times}$}}} \frac{Z}{q} = Z \sum_{q \in \mathcal{P}_0} \frac{\phi(q-1)}{q} \ge c_3 Z\sqrt{N}/\log{N}.\]
Comparing the last two displays, and using the triangle inequality,
\[ \sum_{q \in \mathcal{P}_0} \sum_{h\bmod{q}} \ \left|Z(q,h)-\frac{Z}{q}\right| \ge \frac{1}{2} c_3 Z\sqrt{N}/\log{N}. \]
Applying Cauchy--Schwarz (viewing $|Z(q,h)-Z/q| = q^{-1/2} \cdot q^{1/2} |Z(q,h)-Z/q|)$,
\[ \left(\sum_{q \in \mathcal{P}_0} \sum_{h\bmod{q}} \ \left|Z(q,h)-\frac{Z}{q}\right|\right)^2 \le \#\mathcal{P}_0 \cdot \sum_{q\in \mathcal{P}_0} q \sum_{h\bmod{q}} \left(Z(q,h)-\frac{Z}{q}\right)^2.\]
Since $\#\mathcal{P}_0 \le \pi(\sqrt{N})$, we conclude that
\[ \sum_{q \in \mathcal{P}_0}q  \sum_{h\bmod{q}}\left(Z(q,h)-\frac{Z}{q}\right)^2 \ge \frac{(\frac{1}{2} c_3 Z\sqrt{N}/\log{N})^2}{\pi(\sqrt{N})} \gg Z^2 \sqrt{N}/\log{N}. \]
On the other hand, Proposition \ref{prop:largesieve} implies that the left-hand side of the last display is $O(NZ)$. Thus, $\#\mathcal{N} = Z  \ll N^{1/2} \log{N}$.

We have shown what was claimed at the start of this section, with the constant $c_1 = \frac{1}{2}c_3$: For all but $O(N^{1/2}\log{N})$ integers $\alpha \in [-N,N]$, we have $\#\mathcal{P}(\alpha) > \frac{1}{2}c_3 \sqrt{N}/\log{N}$.

\subsection{Finding $q$ with $v_q(\alpha^{q-1}-1) = 1$}\label{sec:part3} Let $\mathcal{E}$ be the collection of integers $\alpha \in [-N,N]$ such that $v_q(\alpha^{q-1}-1)>1$ for all $q \in \mathcal{P}(\alpha)$. To complete the proof of Theorem \ref{thm:almostall}, it is enough to show that $\#\mathcal{E} \ll N^{1/2}\log{N}$.

Let $\mathcal{E}'$ be the subset of $\mathcal{E}$ where we keep only those $\alpha$ with $\#\mathcal{P}(\alpha) > \frac{1}{2}c_3 \sqrt{N}/\log{N}$. From our work in the last section, we need only show $\#\mathcal{E'} \ll N^{1/2}\log{N}$. In fact, we will show that $\#\mathcal{E}' \ll N^{1/2}$.

For each $\alpha \in \mathcal{E}'$ and each $q \in \mathcal{P}(\alpha)$, put
\[ \mathcal{A}_q(\alpha) = \{\alpha + jq: 0 < |j| < q/2\} \cap [-N,N]. \]
Let us make a few observations about the sets $\mathcal{A}_q(\alpha)$.

First, $\#\mathcal{A}_q(\alpha) \ge \frac{q-1}{2} > \frac{1}{5}\sqrt{N}$ (for large $N$). Indeed, if $\alpha > 0$, then $\alpha +jq \in [-N,N]$ whenever $-1 \ge j \ge -q/2$, while if $\alpha < 0$, then $\alpha+jq \in [-N,N]$ when $1 \le j \le q/2$.

Second, if we fix $\alpha \in \mathcal{E'}$, then the sets $\mathcal{A}_q(\alpha)$ are disjoint for distinct $q\in \mathcal{P}(\alpha)$. Otherwise, we are led to an equation $\alpha + jq = \alpha + j'q'$ where $q,q' \in \mathcal{P}(\alpha)$, $q\ne q'$. This  forces $q$ to divide $j'$, which is impossible since $0<|j'| < \frac{1}{2}q' \le \frac{1}{2}\sqrt{N} < q$.

Third, suppose that $q \in \mathcal{P}(\alpha) \cap \mathcal{P}(\alpha')$, where $\alpha$ and $\alpha'$ are distinct elements of $\mathcal{E'}$. Then $\mathcal{A}_q(\alpha)$ and $\mathcal{A}_q(\alpha')$ are disjoint. Otherwise, $\alpha = \alpha' + Jq$ for some integer $J\not\equiv 0\pmod{q}$. Then
\[ \alpha^{q-1} \equiv \alpha'^{q-1} + J(q-1) q \alpha'^{q-2} \pmod{q^2}, \]
contradicting that $\alpha^{q-1}\equiv \alpha'^{q-1}\equiv 1\pmod{q^2}$.

Let $\mathcal{A}(\alpha) = \cup_{q \in \mathcal{P}(\alpha)} \mathcal{A}_{q}(\alpha)$. By the first and second observations,
\[ \sum_{\alpha \in \mathcal{E}'} \#\mathcal{A}(\alpha) = \sum_{\alpha \in \mathcal{E}'} \sum_{q \in \mathcal{P}(\alpha)} \#\mathcal{A}_q(\alpha) \ge \#\mathcal{E}' \cdot \#\mathcal{P}(\alpha) \cdot \frac{1}{5}\sqrt{N} > \#\mathcal{E}' \cdot \frac{1}{10}c_3  \cdot N/\log{N}. \]
On the other hand, since each $\mathcal{A}(\alpha)\subset [-N,N]$, the second and third observations imply that
\begin{align*} \sum_{\alpha \in \mathcal{E'}} \#\mathcal{A}(\alpha) &= \sum_{\substack{|n| \le N}} \sum_{\substack{\alpha \in \mathcal{E}'}} 1_{n \in \mathcal{A}(\alpha)} \\&=  \sum_{|n| \le N} \sum_{\substack{\alpha \in \mathcal{E}'}} \sum_{q \in \mathcal{P}(\alpha)} 1_{n \in \mathcal{A}_q(\alpha)} \\
&= \sum_{|n| \le N} \sum_{q \in (\frac{1}{2}\sqrt{N},\sqrt{N}]} \sum_{\substack{\alpha \in \mathcal{E}' \\ q \in \mathcal{P}(\alpha)}} 1_{n \in \mathcal{A}_q(\alpha)} \\&\le \sum_{|n| \le N} \sum_{q \in (\frac{1}{2}\sqrt{N},\sqrt{N}]} 1 \ll N^{3/2}/\log{N}. \end{align*}
(Here the second observation has been used to go from the first line to the second, while the third observation has been used to go from the third line to the fourth.) Comparing the last two displays gives $\#\mathcal{E}' \ll N^{1/2}$, as claimed. This completes the proof of Theorem \ref{thm:almostall}.

\begin{rmk} For any \emph{fixed} integer $\beta$ with $|\beta| > 1$, Hypothesis \ref{hyp:b} holds for all large $N$, as a consequence of the Chebotarev density theorem applied to $\Q(\sqrt{\beta})$. (Alternatively, one can apply the prime number theorem for arithmetic progressions.) Following the above arguments, we conclude that $\alpha,\beta$ is special for all but $O(N^{1/2}\log{N})$ integers $\alpha$ with $|\alpha| \le N$. \end{rmk}

\section*{Acknowledgements}
The authors would like to express their thanks to Alexandr Kalmynin for pointing out the paper \cite{DKSZ20} and to the referee for helpful comments. The preparation of this paper was  possible only due to the Nineteenth Annual Workshop on Combinatorial and Additive Number Theory (CANT 2021, May 24--28, 2021). Both authors are very grateful to Melvyn Nathanson for organizing this beautiful conference. The work of Sergei Konyagin was performed at the Steklov International Mathematical Center
and supported by the Ministry of Science and Higher Education of
the Russian Federation (agreement no. 075-15-2019-1614).
Paul Pollack is supported by the US National Science Foundation (award DMS-2001581).

\providecommand{\bysame}{\leavevmode\hbox to3em{\hrulefill}\thinspace}
\providecommand{\MR}{\relax\ifhmode\unskip\space\fi MR }
\providecommand{\MRhref}[2]{%
  \href{http://www.ams.org/mathscinet-getitem?mr=#1}{#2}
}
\providecommand{\href}[2]{#2}


\begin{thebibliography}{10}

\bibitem{banaszak98}
G.~Banaszak, \emph{Mod {$p$} logarithms {$\log_23$} and {$\log_32$} differ for
  infinitely many primes}, Ann. Math. Sil. (1998), no.~12, 141--148.

\bibitem{bombieri65}
E.~Bombieri, \emph{On the large sieve}, Mathematika \textbf{12} (1965),
  201--225.

\bibitem{cohen07}
H.~Cohen, \emph{Number theory. {V}ol. {I}. {T}ools and {D}iophantine
  equations}, Graduate Texts in Mathematics, vol. 239, Springer, New York,
  2007.

\bibitem{CS97}
C.~Corrales-Rodrig\'{a}\~{n}ez and R.~Schoof, \emph{The support problem and its
  elliptic analogue}, J. Number Theory \textbf{64} (1997), 276--290.

\bibitem{DKSZ20}
A.~Dunn, B.~Kerr, I.E. Shparlinski, and A.~Zaharescu, \emph{Bilinear forms in
  {W}eyl sums for modular square roots and applications}, Adv. Math.
  \textbf{375} (2020), 107369, 58.

\bibitem{gallagher67}
P.X. Gallagher, \emph{The large sieve}, Mathematika \textbf{14} (1967), 14--20.

\bibitem{j20}
O.~Järviniemi, \emph{Equality of orders of a set of integers modulo a prime},
  Proc. Amer. Math. Soc. \textbf{149} (2021), 3651--3668.

\bibitem{JP21}
M.~Just and P.~Pollack, \emph{Comparing multiplicative orders mod $p$, as p
  varies}, New York J. Math. \textbf{27} (2021), 600--614.

\bibitem{schinzel60}
A.~Schinzel, \emph{On the congruence {$a^{x}\equiv b$} {$({\rm mod}\ p)$}},
  Bull. Acad. Polon. Sci. S\'{e}r. Sci. Math. Astronom. Phys. \textbf{8}
  (1960), 307--309.

\bibitem{SW92}
A.~Schinzel and J.~W\'{o}jcik, \emph{On a problem in elementary number theory},
  Math. Proc. Cambridge Philos. Soc. \textbf{112} (1992), 225--232.

\end{thebibliography}
\end{document}